\documentclass[11pt,a4paper,oneside]{article}
\usepackage{setspace}
\usepackage{fancyhdr}
\usepackage{tabularx}
\usepackage[a4paper,left=1cm,right=1cm,top=1cm,bottom=2cm]{geometry}

\usepackage[T2A]{fontenc}
\usepackage[utf8]{inputenc}
\usepackage{graphicx}
\usepackage[affil-it]{authblk}
\usepackage{amssymb}
\usepackage{amsmath}
\usepackage{amsthm}
\usepackage{subcaption}
\usepackage[natbib=true,style=numeric,sorting=none,citestyle=numeric-comp,style=ieee,backend=biber]{biblatex}
\usepackage{comment}

\usepackage{mathrsfs}
\usepackage{epigraph}

\addbibresource{main.bib}

\setlength\epigraphrule{0pt}

\newtheorem{theorem}{Theorem}
\newtheorem{proposition}{Proposition}
\newtheorem{lemma}{Lemma}

\setlength{\parindent}{0em}
\setlength{\parskip}{0.75em}

\title{\textbf{Asymptotically Stable Non-Falling Solutions\\ of the Kapitza-Whitney Pendulum}}


\author{\textbf{Ivan Polekhin}%
  \thanks{Electronic address: \texttt{ivanpolekhin@mi-ras.ru}}}
\affil{Steklov Mathematical Institute of Russian Academy of Sciences,\\ Moscow, Russia\\
Moscow Institute of Physics and Technology,\\
Dolgoprudny, Russia}

\date{\textbf{24.04.2021 -- 16.05.2021}}

\date{}

\begin{document}

\maketitle

\section*{Abstract}

The planar inverted pendulum with a vibrating pivot point in the presence of an additional horizontal force field is studied. The horizontal force is not assumed to be small or rapidly oscillating. We assume that the pivot point of the pendulum rapidly oscillates in the vertical direction and the period of these oscillations is commensurable with the period of horizontal force. This system can be considered as a strong generalization of the Kapitza pendulum. Previously it was shown that for any horizontal force there always exists a non-falling periodic solution in the considered system. In particular, when there is no horizontal force, this periodic solution is the vertical upward position. In the paper we present analytical and numerical results concerning the existence of asymptotically stable non-falling periodic solutions in the system.

\vspace{10pt}
\noindent\textbf{Keywords: } forced oscillation, the Kapitza pendulum, the Whitney pendulum, stabilization, vibration.

\section{Introduction}

The problem of motion of a pendulum with a vibrating pivot point is one of the well-studied non-linear dynamical systems. Like other classical systems --- as examples we can mention here the Duffing equation or the Van der Pol oscillator --- this system, firstly, is quite simple, which allows one to study this system analytically, and secondly, some non-trivial dynamical effects can be observed within the framework of this system.

Similarly to the the Duffing equation or the Van der Pol oscillator, the study of the pendulum with a vibrating pivot point goes beyond the boundaries of initial statements of the problems. Historically the problem of motion of a pendulum with a vibrating base goes back to A. Stephenson \cite{stephenson1908xx}, N.N. Bogolyubov \cite{bogolyubov1950perturbation} and P.L. Kapitza \cite{kapitsa1951dynamic,kapitsa1951pendulum}. Let us also mention some recent papers on the considering problem \cite{araujo2021parametric,artstein2021pendulum,belyaev2021classical,cabral2021parametric}.
The comprehensive bibliography can hardly be listed here. However, one can find a relatively full overview of the papers on the topic in \cite{butikov2001dynamic,samoilenko1994nn}, including some references on the history of the problem.

In the presence of friction, for the pendulum with a vibrating base, one can show that the vertical upward position (usually unstable) becomes asymptotically stable, provided that the parameters of the system satisfy some conditions. The proof follows from the classical method of averaging. If there is no friction in the system, the proof is more complicated and also involves KAM theory \cite{bardin1995stability}. As a natural generalization of this system, one can consider a planar mathematical pendulum with a vibrating point of suspension and in a periodic in time horizontal force field.

If the horizontal force is rapidly oscillating, then we again can study this system by means of the averaging theory. This problem has been considered previously (see, for instance, \cite{burd2007method}). In a more general statement of the problem, it is not assumed that the horizontal force has a small period. In \cite{polekhin2020method} it was shown that for any horizontal periodic force, such that its period is commensurate to the period of the vertical oscillation, there always exists a periodic non-falling solution. Here we say that the considered solution is non-falling if the rod of the pendulum never becomes horizontal and remains in the upper half-plane. If the pivot point does not oscillate, we obtain the so-called Whitney pendulum (for details, see \cite{polekhin2020method}). When there is no horizontal force, then we can consider the vertical upward equilibrium as a non-falling solution. Is it always possible to make this non-falling solution stable by changing the parameters of the vertical vibration? We will study this question in the paper.

In the first section we present the equations of motion and some analytical results on the problem. In particular, we will consider the case when the horizontal force is weak. The case when the horizontal force is not weak is considered numerically.

 To be more precise, we consider the following two problems:
\begin{enumerate}
    \item Given any $2\pi$-periodic non-falling solution one can uniquely determine the corresponding horizontal force, which guarantees the existence of this solution. For which values of the amplitude of the vertical oscillation does this solution become stable?
    \item Given any $2\pi$-periodic horizontal force. Again, we find numerically the values of the amplitude such that there exists a periodic stable non-falling solution.
\end{enumerate}

In the conclusion section we will briefly discuss some conjectures related to the problem. 

\section{Equations of motion}

\subsection{Planar pendulum}

Let us consider a point of mass $m$ moving along a circle of radius $l$ in the presence of a gravitational field. We assume that there is a force of viscous friction acting on the point. If the circle is placed in a vertical plane, then this system is the classical planar mathematical pendulum (Fig. 1). If the pivot point of the pendulum moves along the vertical axis and the law of motion is given by a function $h(t)$, then the position of the point is defined as follows
\begin{align*}
    &x = l \sin \varphi,\\
    &y = h(t) - l \cos \varphi.
\end{align*}
Here $\varphi$ is the angle between the vertical axis and the rod of the pendulum. The kinetic and the potential energy ($T$ and $\Pi$, respectively) of the system have the usual form
\begin{align*}
    T = \frac{m}{2}(ml^2 \dot\varphi^2 + 2\dot h \dot \varphi l \sin\varphi), \quad \Pi = - mgl \cos\varphi.
\end{align*}
Here $g$ is the acceleration of gravity. The usual Lagrangian equations of motion for this system (with the Lagrangian function $L = T - \Pi$) have the following form
\begin{align*}
    ml^2 \ddot \varphi + m \ddot h l \sin \varphi = -mgl \sin \varphi.
\end{align*}

\begin{figure}[h!]
  \centering
  \includegraphics[width=0.66\linewidth]{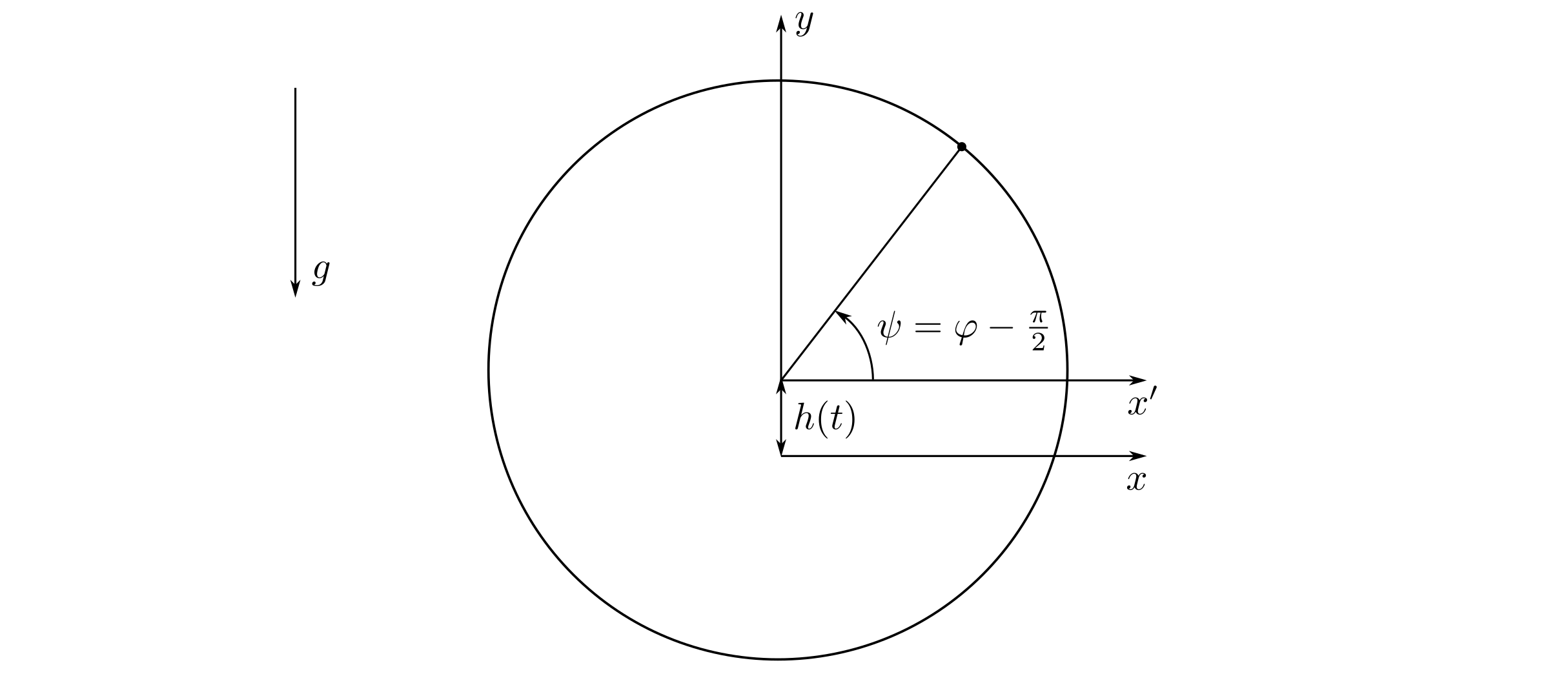}
  \caption{Inverted pendulum with a vibrating pivot point.}
  \label{fig:sfig2}
\end{figure}

If we additionally consider viscous friction and a horizontal force acting on the system, then the above equation have to be rewritten as follows
\begin{align}
    \label{eq1}
    ml^2 \ddot \varphi + m \ddot h(t) l \sin \varphi = -mgl \sin \varphi - \mu l^2 \dot \varphi + F(t) l \cos\varphi.
\end{align}
Here $\mu$ is the coefficient of viscous friction and $F(t)$ is a function of time. Without any loss of the generality, we can assume that $m = g  = l = 1$. Then system \eqref{eq1} can be rewritten as follows
\begin{align}
\begin{split}
    \label{eq2}
    &\dot \varphi = p - \dot h \sin \varphi,\\
    &\dot p = -\sin \varphi - \mu p + \mu \dot h \sin \varphi + \dot h p \cos \varphi - \dot h^2 \sin \varphi \cos\varphi + F \cos\varphi.
\end{split}
\end{align}
Below we assume that $h(t)$ is a rapidly oscillating function. To be more precise, we assume that this function has the form 
$$
h(t) = a \varepsilon \sin \frac{t}{\varepsilon}.
$$
For small $\varepsilon > 0$ it is possible to consider the averaged system
\begin{align}
\begin{split}
    \label{eq3}
    &\dot \varphi = p,\\
    &\dot p = -\sin \varphi - \mu p - (a^2/4)  \sin 2\varphi + F(s) \cos\varphi,\\
    &\dot s = 1.
\end{split}
\end{align}

Here $s$ is an artificial time-like parameter (slow time) which is introduced in order to distinguish two types of time-dependent functions.

\section{Analytical results}

\subsection{Periodic solutions of the averaged system}

In this section we obtain some analytical results on the existence of a periodic solution (and its stability) for system \eqref{eq2}. First, we will consider the same question for averaged system \eqref{eq3}. For instance, by means of the classical Poincare method we will prove the existence of a periodic asymptotically stable solution, provided that the perturbation is small and non-autonomous. Therefore, from a result by N.N. Bogolyubov on the existence of an integral manifold, we obtain the existence of a periodic asymptotically stable solution in the original system.

We will also consider the case when the perturbation in \eqref{eq3} is not small. Here we will use a result by P. Torres on the existence and stability of periodic solutions for the Duffing equation \cite{torres2004existence}. 

Everywhere below we assume that all functions are real analytic, except for the cases when we explicitly assume that some function belongs to another class of regularity.

\begin{proposition}
Let function $F(t)$ in \eqref{eq3} be as follows: $F(t) = c f(t)$, where $f$ is a $2\pi$-periodic function. Let $\mu > 0$ and $a^2 > 2$, then for sufficiently small  $c > 0$, for system \eqref{eq3}, there exists a $2\pi$-periodic asymptotically stable solution, and this solution coincides with the vertical upward equilibrium for $c = 0$.
\end{proposition}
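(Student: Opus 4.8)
The plan is to treat this as a standard continuation (Poincar\'e) argument: the unperturbed ($c=0$) averaged system \eqref{eq3} is autonomous and possesses a hyperbolic asymptotically stable equilibrium at the inverted position, and a small time-periodic perturbation deforms this equilibrium into a nearby periodic orbit of the same stability type. I would verify the hyperbolicity explicitly and then invoke the implicit function theorem for the period map.

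First I would locate the unperturbed equilibrium. Setting $c = 0$ and $\dot\varphi = \dot p = 0$ gives $p = 0$ and $\sin\varphi\,(1 + (a^2/2)\cos\varphi) = 0$; the vertical upward position $\varphi = \pi$, $p = 0$ is a solution for every $a$. Linearizing the planar vector field $(\dot\varphi,\dot p)$ about $(\pi,0)$ via $\varphi = \pi + \psi$, using $\sin(\pi+\psi)\approx-\psi$ and $\sin(2\pi+2\psi)\approx 2\psi$, yields $\ddot\psi + \mu\dot\psi + (a^2/2 - 1)\psi = 0$, i.e. the matrix $A = \left(\begin{smallmatrix} 0 & 1 \\ 1 - a^2/2 & -\mu \end{smallmatrix}\right)$ with characteristic polynomial $\lambda^2 + \mu\lambda + (a^2/2 - 1)$. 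By the Routh--Hurwitz criterion both roots have negative real part precisely when $\mu > 0$ and $a^2 > 2$, which are exactly the hypotheses; hence $(\pi,0)$ is a hyperbolic sink of the $c = 0$ flow.

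Next I would pass to the period map. Since $f$ is $2\pi$-periodic, \eqref{eq3} is $2\pi$-periodic in $s$, so let $P_c \colon \mathbb{R}^2 \to \mathbb{R}^2$ be the time-$2\pi$ stroboscopic (Poincar\'e) map sending the state at $s = 0$ to the state at $s = 2\pi$; the $2\pi$-periodic solutions correspond bijectively to the fixed points of $P_c$, and $P_c$ depends analytically on $c$ and on the initial data by analytic dependence of ODE solutions on parameters. At $c = 0$ the point $(\pi,0)$ is a fixed point and $DP_0 = \exp(2\pi A)$, whose eigenvalues $\exp(2\pi\lambda_{1,2})$ lie strictly inside the unit disk by the previous step; in particular $1$ is not an eigenvalue, so $DP_0 - I$ is invertible. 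The implicit function theorem applied to $G(\,\cdot\,,c) := P_c(\,\cdot\,) - (\,\cdot\,)$ then produces a unique analytic branch of fixed points $(\varphi_0(c), p_0(c))$ for small $c > 0$ with $(\varphi_0(0), p_0(0)) = (\pi,0)$; this branch is the sought $2\pi$-periodic solution, which reduces to the upward equilibrium at $c = 0$.

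Finally, stability follows from continuity of the spectrum: the eigenvalues of $DP_c$ at the continued fixed point depend continuously on $c$ and lie in the open unit disk at $c = 0$, hence remain there for $c$ small; a fixed point of the period map with spectrum inside the open unit disk corresponds to an asymptotically stable periodic orbit, giving the claim. I expect the only genuinely delicate point to be the spectral bookkeeping --- relating the negative-real-part eigenvalues of $A$ to multipliers of modulus less than one through $DP_0 = \exp(2\pi A)$ and confirming $1 \notin \operatorname{spec}(DP_0)$ --- whereas the continuation itself and the Routh--Hurwitz verification are routine.
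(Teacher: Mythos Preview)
Your argument is correct and follows essentially the same Poincar\'e continuation approach as the paper: apply the implicit function theorem to the time-$2\pi$ map at the hyperbolic equilibrium $(\pi,0)$ of the $c=0$ system, then carry over asymptotic stability by continuity of the characteristic multipliers. The only cosmetic difference is that you verify the nondegeneracy condition $1\notin\operatorname{spec}(DP_0)$ by computing the characteristic polynomial of the linearization and invoking Routh--Hurwitz, whereas the paper excludes nontrivial $2\pi$-periodic solutions of the variational system via a Lyapunov-function argument ($\dot H=-\mu p^2\le 0$).
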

\begin{proof}
Let us denote the phase variable by $x = (\varphi, p)$, and the right hand side by $G(t,x,c)$. The standard method of proving the existence of a periodic solution for small $c$ is based on the application of the implicit function theorem to the equation
$$
x(2\pi, x_0; c)- x_0 = 0,
$$
here by $x(t, x_0; c)$ we denote a the solution of the equation
$$
\dot x = G(t,x,c)
$$
with initial condition $x(0) = x_0$. For $c = 0$ we have a trivial solution (the vertical equilibrium of the pendulum). For small $c \ne 0$ there exists a function $x_0(c)$ such that $x(2\pi, x_0(c), c) - x_0(c) = 0$, provided that
$$
\det \left( \frac{\partial x (2\pi, x_0; 0)}{\partial x_0} - E \right) \ne 0.
$$
Here $E$ is the identity matrix. The determinant equals zero when the system in variations (w.r.t. solution $\varphi = \pi$, $p = 0$) has nontrivial periodic solutions:
\begin{align*}
\begin{split}
    &\dot \varphi = p,\\
    &\dot p = -\varphi\left(1 + \frac{a^2}{2}\right) - \mu p.
\end{split}
\end{align*}
It can be easily shown that for function
$$
H = \frac{p^2}{2} + \left(1 + \frac{a^2}{2}\right) \frac{\varphi^2}{2}
$$
we almost everywhere have $\dot H = -\mu p^2 < 0$. Therefore, there exist no non-trivial periodic solutions for the system in variations.

The obtained solutions will be asymptotically stable. Indeed, the vertical position is asymptotically stable for $c=0$. If we consider this equilibrium as a periodic solution, then we conclude that its characteristic multipliers are strictly inside the unit circle. From the continuous dependence of the characteristic multipliers on $c$, we obtain that for small $c$ the considered solution is also asymptotically stable. 
\end{proof}

When the friction is also small, i.e. if we put $c = \mu$  in \eqref{eq3}, then it is also possible to prove the existence of an asymptotically stable $2\pi$-periodic solution for small positive values of $\mu$.  

This result follows from the following theorem proved by Lyapunov \cite{malkin1949methods}.

\begin{theorem}
\label{th1}

Let us have a non-autonomous Lyapunov system on a plane, i.e. a system of differential equations of the following form
\begin{align}
\begin{split}
    \label{eq7}
    &\dot x = -\lambda y + f_x(x,y) + \mu F_x(t,x,y,\mu),\\
    &\dot y = \lambda x + f_y(x,y) + \mu F_y(t,x,y,\mu).
\end{split}
\end{align}
Here $F_x, F_y$ are $2\pi$-periodic functions in $t$. Functions $f_x$ и $f_y$ are of order no less than $two$ in $x, y$. We also assume that
$$
f_x = - \frac{\partial S}{\partial y}, \quad f_y = \frac{\partial S}{\partial x}
$$
If $\lambda \not\in \mathbb{Z}$, then there exists a unique $2\pi$-periodic solution of system \eqref{eq7}. This solution depends analytically on $\mu$ for small $\mu$, and for $\mu = 0$ we obtain the trivial solution $x = y = 0$.
Moreover, if 
$$
\mu \int\limits_0^{2\pi}\left.\left( \frac{\partial f}{\partial x} + \frac{\partial F}{\partial y} \right)\right|_{x=y=\mu=0}\, dt < 0,
$$
then all solutions in the obtained one-parameter family of periodic solutions are asymptotically stable. 
\end{theorem}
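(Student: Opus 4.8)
The plan is to use the Poincar\'e small-parameter (continuation) method together with Floquet theory, exploiting the Hamiltonian structure of the unperturbed nonlinearity. First I would record the crucial structural consequence of the hypothesis $f_x=-\partial S/\partial y$, $f_y=\partial S/\partial x$: the autonomous field obtained at $\mu=0$ is divergence free, since
$$
\frac{\partial f_x}{\partial x}+\frac{\partial f_y}{\partial y}=-\frac{\partial^2 S}{\partial x\,\partial y}+\frac{\partial^2 S}{\partial y\,\partial x}=0,
$$
and the linear rotation part $(-\lambda y,\lambda x)$ is divergence free as well. Hence for $\mu=0$ the origin is a center surrounded by periodic orbits (it admits the analytic first integral $\tfrac{\lambda}{2}(x^2+y^2)+S$), and the whole unperturbed flow is area preserving. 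This is the feature that later pins the characteristic multipliers onto the unit circle at $\mu=0$.

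For existence, uniqueness and analytic dependence I would introduce the time-$2\pi$ displacement map
$$
\Phi(x_0,\mu)=z(2\pi,x_0;\mu)-x_0,\qquad z=(x,y),
$$
where $z(t,x_0;\mu)$ is the solution with $z(0)=x_0$. Since the origin is an equilibrium of the $\mu=0$ field, $\Phi(0,0)=0$, and a $2\pi$-periodic solution corresponds to a zero of $\Phi$. The linearization of the $\mu=0$ field at the origin is exactly the rotation with angular speed $\lambda$ (because $f$ is of order $\ge 2$), so its monodromy over the period is
$$
\frac{\partial z(2\pi,0;0)}{\partial x_0}=\begin{pmatrix}\cos 2\pi\lambda & -\sin 2\pi\lambda\\ \sin 2\pi\lambda & \cos 2\pi\lambda\end{pmatrix},
$$
and $\partial_{x_0}\Phi(0,0)=M_0-E$ is invertible precisely when $2\pi\lambda\notin 2\pi\mathbb{Z}$, i.e. when $\lambda\notin\mathbb{Z}$. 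The analytic implicit function theorem (the right-hand side being real analytic, the flow depends analytically on $x_0$ and $\mu$) then yields a unique analytic branch $x_0(\mu)$ with $x_0(0)=0$, which is the asserted periodic solution.

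For asymptotic stability I would pass to the Floquet multipliers of the variational equation along the branch $z(t;\mu)$. Let $M(\mu)$ be the corresponding monodromy matrix and $\rho_1(\mu),\rho_2(\mu)$ its eigenvalues. At $\mu=0$ they equal $e^{\pm 2\pi i\lambda}$, a genuinely complex conjugate pair on the unit circle (for $\lambda$ not a half-integer), so by continuity they remain a conjugate pair for small $\mu$ and $|\rho_1(\mu)|^2=\rho_1\rho_2=\det M(\mu)$. By the Liouville--Abel formula,
$$
\det M(\mu)=\exp\!\left(\int_0^{2\pi}\mathrm{tr}\,A(t;\mu)\,dt\right),
$$
where $A$ is the variational matrix; since the $\mu=0$ part is divergence free, the trace reduces to $\mu$ times the divergence of the perturbation evaluated along the orbit, so to leading order
$$
\det M(\mu)=\exp\!\left(\mu\int_0^{2\pi}\Big(\tfrac{\partial F_x}{\partial x}+\tfrac{\partial F_y}{\partial y}\Big)\Big|_{x=y=\mu=0}\,dt+O(\mu^2)\right),
$$
the exponent being exactly (the leading part of) the quantity in the hypothesis. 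Thus the sign condition makes $\det M(\mu)<1$ for small $\mu$, forcing $|\rho_{1,2}(\mu)|<1$ and hence asymptotic stability.

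I expect the main obstacle to be the Floquet step rather than the continuation step. Two points need care: first, justifying that the multipliers stay a strict complex conjugate pair — the half-integer values of $\lambda$ (where $e^{\pm2\pi i\lambda}=-1$ is a double multiplier) are degenerate, and a possible splitting into two real multipliers must be excluded or treated separately; second, justifying the leading-order replacement of the divergence integral along the shrinking orbit $z(t;\mu)$ by its value at the origin, which needs uniform analytic estimates as $\mu\to 0$. The area-preserving structure is precisely what fixes the unperturbed product of multipliers at $1$ and makes the integrated divergence of the perturbation the decisive first-order quantity.
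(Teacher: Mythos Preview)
The paper does not prove this theorem at all: it is quoted as a classical result of Lyapunov, with a reference to Malkin's monograph, and is then applied without argument. So there is no ``paper's own proof'' to compare against; your sketch is essentially the standard Poincar\'e--Lyapunov argument one finds in that literature, and its two pillars --- the implicit function theorem applied to the displacement map, plus Liouville's formula for $\det M(\mu)$ exploiting that the $\mu=0$ field is divergence free --- are exactly the right ones.

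Your self-diagnosed weak point is accurate and is the only real gap. For $\lambda\notin\tfrac12\mathbb{Z}$ the unperturbed multipliers $e^{\pm 2\pi i\lambda}$ are a simple conjugate pair, continuity keeps them conjugate, and $|\rho_{1,2}|^2=\det M(\mu)<1$ finishes the job. For half-integer $\lambda$ the unperturbed monodromy is $-E$, the discriminant $(\operatorname{tr} M)^2-4\det M$ vanishes at $\mu=0$, and nothing in your argument prevents the pair from splitting into two real eigenvalues with product $<1$ but one of modulus $>1$. The theorem as stated only excludes integer $\lambda$, so to match it you must actually handle this case --- the classical treatment does so by expanding the Floquet exponents (not just their product) to first order in $\mu$ and showing both real parts pick up the same sign from the averaged divergence. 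Your leading-order replacement of the divergence along $z(t;\mu)$ by its value at the origin is unproblematic: $z(t;\mu)=O(\mu)$ uniformly in $t$, so the error is $O(\mu^2)$.
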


If we apply this theorem to system \eqref{eq3}, we obtain the following proposition.
\begin{proposition}
Let $(1 + a^2/2)^{1/2} \notin \mathbb{Z}$, $F(t) = \mu f(t)$, where $f(t)$ is a $2\pi$-periodic function. Then for small $\mu > 0$ there is a $2\pi$-periodic asymptotically stable solution of \eqref{eq3}.
\end{proposition}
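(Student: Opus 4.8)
The plan is to notice that, once the forcing amplitude is tied to the friction by setting $c=\mu$, system \eqref{eq3} localized near the inverted equilibrium is exactly a planar Lyapunov system of the form \eqref{eq7}, so that Theorem~\ref{th1} applies verbatim. First I would move the equilibrium to the origin by writing $\varphi=\pi+X$, and let $\lambda>0$ denote the frequency of the linearization of the unperturbed averaged pendulum at $\varphi=\pi$ (the quantity computed in the first proposition, so that $\lambda=(1+a^2/2)^{1/2}$). Rescaling the momentum by $Y=-p/\lambda$ turns the linear part into the pure rotation $\dot X=-\lambda Y$, $\dot Y=\lambda X$. Because the equation $\dot\varphi=p$ carries neither damping nor forcing, the first equation becomes $\dot X=-\lambda Y$ with no correction at all; since the autonomous restoring terms $-\sin\varphi-(a^2/4)\sin2\varphi$ are minus the derivative of a potential, the second equation takes the form $\dot Y=\lambda X+f_Y(X)-\mu Y+(\mu/\lambda)f(t)\cos X$, where $f_Y(X)$ collects the terms of order $\ge 2$ in $X$.

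Next I would check the hypotheses of Theorem~\ref{th1}. For the required Hamiltonian structure of the autonomous nonlinearity, observe that $f_X\equiv 0$ and that $f_Y$ depends on $X$ alone, so the scalar $S(X)=\int_0^X f_Y(u)\,du$ satisfies $f_X=-\partial S/\partial Y=0$ and $f_Y=\partial S/\partial X$, which is precisely the gradient form demanded by the theorem; this simply reflects that the unperturbed averaged pendulum is a one–degree–of–freedom conservative system, and the scalar rescaling of the momentum is conformally symplectic and hence preserves that structure. The non-resonance requirement $\lambda\notin\mathbb{Z}$ is exactly the hypothesis $(1+a^2/2)^{1/2}\notin\mathbb{Z}$. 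Together these guarantee that the linearized time-$2\pi$ map minus the identity is invertible, so Theorem~\ref{th1} yields a unique $2\pi$-periodic solution that depends analytically on $\mu$ and collapses to the equilibrium at $\mu=0$.

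For asymptotic stability I would evaluate the averaged-divergence criterion of Theorem~\ref{th1} on the $\mu$-perturbation $(\mu F_X,\mu F_Y)$, where $F_X\equiv 0$ and $F_Y=-Y+(1/\lambda)f(t)\cos X$. The forcing term is independent of $Y$ and therefore contributes nothing, while the damping supplies $\partial F_Y/\partial Y=-1$; hence the integrand is identically $-1$, the integral over the period equals $-2\pi$, and the criterion $\mu\cdot(-2\pi)<0$ holds for every $\mu>0$. Transporting this conclusion back through the autonomous, invertible change of variables gives the asymptotically stable $2\pi$-periodic solution of \eqref{eq3} asserted in the proposition.

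The Taylor expansions and the bookkeeping of which terms are $O(\mu)$ are routine; the one point that must be handled with care is the normalization. One has to verify that rescaling the momentum converts the linear part into the exact rotation $\dot X=-\lambda Y$, $\dot Y=\lambda X$ required by \eqref{eq7} while leaving the autonomous nonlinearity in gradient form --- that is, that this (non-symplectic but conformally symplectic) rescaling does not spoil the Hamiltonian structure the theorem demands. Because the $\varphi$-equation is free of both friction and forcing, the computation closes cleanly, and the dissipation responsible for the negative divergence enters solely through the single term $-\mu Y$.
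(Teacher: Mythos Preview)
Your proof is correct and follows the same route as the paper: shift the inverted equilibrium to the origin, rescale the momentum so that the linear part becomes the pure rotation at frequency $\lambda=(1+a^2/2)^{1/2}$, observe that the autonomous nonlinearity comes from a potential (so the unperturbed system is Hamiltonian), and apply Theorem~\ref{th1} with the averaged-divergence integral evaluating to $-2\pi\mu<0$. Your write-up is more explicit than the paper's---you give the change of variables, construct $S$, and isolate why only the damping term $-\mu Y$ contributes to the divergence---but the strategy and the key computations are identical.
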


\begin{proof}
System \eqref{eq3} can be presented in the following form
\begin{align*}
\begin{split}
    &\dot x = -(1+ a^2/2)^{1/2} y + X(y) - \mu y + \mu f(t)  \cos(y/(1+a^2/2)^{1/2}),\\
    &\dot y = (1+ a^2/2)^{1/2} x.
\end{split}
\end{align*}
Where $X(y)$ is an analytic function of order no less than two in $y$. If $\mu = 0$, then this system is Hamiltonian. From Theorem \ref{th1} we obtain the existence of a periodic solution. Since $\mu > 0$, then 
$$
-2 \pi \mu < 0.
$$
Therefore, this solution is asymptotically stable.
\end{proof}

Let us now consider the case when it is not assumed that $F(t)$ is small. We will use the following version of a result by P. Torres \cite{torres2004existence} (see also \cite{njoku2003stability}).

Given a Duffing equation
\begin{align}
\label{duff}
\ddot x + c \dot x + g(t,x) = 0,
\end{align}
where $c > 0$, $g \colon \mathbb{R}^2 \to \mathbb{R}$ is $2\pi$-periodic in $t$, by $\Omega_{k,c}$ we denote the following set
$$
\Omega_{k,c} = \left\{ f \in L^k(0, 2\pi) \colon f \succ 0, \| f \|_k < \left( 1 + \frac{c^2}{4} \right) K\left( \frac{2k}{k-1} \right) \right\}.
$$

Here $f \succ 0$ means that $f \geqslant 0$ (a.e.) and $f > 0$ in a subset of positive measure.
$$
K(q) = 
\begin{cases}
\frac{1}{q \cdot (2\pi)^{2/q}} \left( \frac{2}{2+q} \right)^{1 - 2/q} \left( \frac{\Gamma(1/2)}{\Gamma(1/2 + 1/q)} \right)^2, & \mbox{ if } 1 \leqslant q < +\infty, \\
2/\pi, & \mbox{ if } q = +\infty.
\end{cases}
$$

\begin{theorem}
Let $\alpha > \beta$ be two numbers such that $g(t, \alpha) < 0$ and $g(t, \beta) > 0$ for all $t$ and for some $1 \leqslant k \leqslant +\infty$ there exists $f \in \Omega_{k,c}$ and for all $x \in [\beta, \alpha]$
$$
\frac{\partial g(t,x)}{\partial x} \leqslant f(t) \mbox{ \normalfont{(a.e.).} }
$$
Then system \eqref{duff} has at least one asymptotically stable solution $x(t)$ and $x(t) \in (\beta, \alpha)$ for all $t$.
\end{theorem}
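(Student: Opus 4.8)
The plan is to split the statement into two parts: the \emph{existence and localization} of a $2\pi$-periodic solution, and its \emph{asymptotic stability}. For existence I would use the method of lower and upper solutions. The sign hypotheses are tailored to exactly this: writing the equation as $\ddot x = -c\dot x - g(t,x)$, a constant $\beta$ is a lower solution precisely when $0 \ge -c\cdot 0 - g(t,\beta)$, i.e. $g(t,\beta)\ge 0$, and a constant $\alpha$ is an upper solution precisely when $g(t,\alpha)\le 0$. Since $g(t,\beta)>0$ and $g(t,\alpha)<0$ for all $t$, the constants $\beta$ and $\alpha$ are a \emph{strict} lower and a \emph{strict} upper solution respectively, and they are well ordered because $\beta<\alpha$. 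The standard lower/upper solution theorem for the $2\pi$-periodic problem then yields a $2\pi$-periodic solution $x(t)$ with $\beta\le x(t)\le\alpha$, and the strict sign conditions together with the maximum principle for the periodic operator $\ddot{}+c\,\dot{}$ upgrade this to $\beta<x(t)<\alpha$ for all $t$.

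For stability I would study the variational equation about $x(t)$,
\[
\ddot y + c\dot y + a(t)\,y = 0, \qquad a(t) := \frac{\partial g}{\partial x}\bigl(t, x(t)\bigr),
\]
a damped Hill equation with a $2\pi$-periodic coefficient; asymptotic stability of $x(t)$ is equivalent to all its Floquet multipliers lying strictly inside the unit disk. Since $x(t)\in(\beta,\alpha)$, the hypothesis $\partial_x g(t,x)\le f(t)$ on $[\beta,\alpha]$ gives the pointwise majorization $a(t)\le f(t)$ a.e., with $f\in\Omega_{k,c}$. The classical substitution $y(t)=e^{-ct/2}z(t)$ removes the damping and turns this into the undamped Hill equation $\ddot z + \bigl(a(t)-c^2/4\bigr)z=0$, whose monodromy has determinant $1$, so its multipliers satisfy $\rho_1\rho_2=1$; the multipliers of the original $y$-equation are then $e^{-c\pi}\rho_{1,2}$. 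Hence, if the reduced equation is elliptic (both $\rho_i$ on the unit circle), both $y$-multipliers have modulus $e^{-c\pi}<1$, which is exactly asymptotic stability, the strict contraction being supplied by $c>0$.

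The crux is thus to show that $a\le f$ with $f\in\Omega_{k,c}$ forces the reduced Hill equation into its first stability zone. Here I would invoke the sharp $L^p$ stability criterion of Zhang in the damped form used by Torres: the constant $K(2k/(k-1))$ is the best constant in the relevant Sobolev--Lyapunov inequality, and the threshold $\bigl(1+c^2/4\bigr)K(2k/(k-1))$ defining $\Omega_{k,c}$ is precisely what absorbs the $c^2/4$ shift. Concretely, the condition $f\succ 0$ with $\|f\|_k<\bigl(1+c^2/4\bigr)K(2k/(k-1))$ places the comparison equation $\ddot y+c\dot y+f(t)y=0$ in the asymptotically stable (elliptic) regime -- equivalently, its periodic Green's function is positive, an anti-maximum principle -- and the monotonicity of the rotation number in the potential, applied to $a\le f$, transfers this conclusion to the variational equation. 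I expect this last step, namely establishing the sharp $L^p$ criterion and the comparison that passes from the majorant $f$ to the actual coefficient $a$, to be the main obstacle, since it rests on the fine theory of rotation numbers and stability intervals for Hill's equation rather than on elementary estimates.
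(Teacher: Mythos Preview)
The paper does not prove this theorem at all: it is quoted verbatim as a result of Torres (with a side reference to Njoku--Omari), and is used as a black box to derive Proposition~3. So there is no ``paper's own proof'' to compare to; what you have written is a sketch of Torres's argument itself.

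Your existence step is correct and is exactly the mechanism Torres uses: the constants $\beta<\alpha$ are strict lower and upper solutions for the periodic problem, and the standard theory produces a $2\pi$-periodic solution strictly between them.

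The stability step, however, has a genuine gap. From $a(t)=\partial_x g(t,x(t))\le f(t)$ and $f\in\Omega_{k,c}$ you cannot conclude that the variational equation is elliptic. Monotonicity of the rotation number in the potential gives only an \emph{upper} bound on the rotation number of the $a$-equation; nothing prevents $a(t)$ from being very negative, which would push the reduced Hill equation $\ddot z+(a(t)-c^2/4)z=0$ into the zeroth (hyperbolic) zone. In that case the damped multipliers are $e^{-c\pi}e^{\pm 2\pi\lambda}$ with $\lambda>0$, and one of them can exceed $1$, so the corresponding periodic solution is a saddle, not asymptotically stable. This is consistent with the statement, which only asserts the existence of \emph{at least one} asymptotically stable solution in $(\beta,\alpha)$, not that every periodic solution there is stable.

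The argument Torres actually uses fills this gap with topological degree (in the spirit of Ortega). Strict lower/upper solutions force the fixed-point index of the Poincar\'e map in the order interval to equal $+1$. The hypothesis $\partial_x g\le f$ with $f\in\Omega_{k,c}$ is used not to make each linearization elliptic, but to guarantee that for every periodic solution in $(\beta,\alpha)$ the variational equation is nondegenerate and its multipliers avoid the ray $(-\infty,-1]$; consequently each isolated periodic solution has local index $+1$ (asymptotically stable) or $-1$ (saddle). Summing indices to $+1$ forces at least one of them to be of the stable type. Your sketch is missing precisely this index/counting argument; the ``rotation-number comparison'' you propose gives the needed upper bound on multipliers but not the lower one, and that is why the conclusion is ``at least one'' rather than ``the'' stable solution.
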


As a corollary from this theorem we obtain the following result on the existence of an asymptotically stable $2\pi$-periodic solution of \eqref{eq3}. First, let us introduce the following notation
$$
\Phi(\varphi) = -\sin \varphi - \frac{a^2}{4} \sin 2\varphi.
$$
If $a^2 > 2$, then function $\Phi$ has two local maxima inside the interval $(\pi/2, 3\pi/2)$ and $\Phi > 0$ at these points; $\Phi$ also has two local minima where $f < 0$.
By $\lambda_1$ and $\lambda_2$ we denote the following numbers
$$
\lambda_1 = \frac{-1 + \sqrt{1 + 2 a^4}}{2a^2}, \quad \lambda_2 = \frac{-1 - \sqrt{1 + 2a^4}}{2a^2}.
$$
For $a^2\in (0, 2)$ two critical points (inside $[0, 2\pi]$) of $\Phi(\varphi)$ are as follows
$$
\varphi_{min}^1 = \mathrm{arccos}(\lambda_1), \quad \varphi_{max}^1 = 2\pi - \mathrm{arccos}(\lambda_1).
$$
As $a^2$ tends to $0$, value $\varphi_{min}^1$ tends to $\pi/2$ and $\varphi_{max}^1$ tends to $3\pi/2$. As $a^2$ tends to $\infty$, $\varphi_{min}^1$ tends to $\pi/4$ and $\varphi_{max}^1$ tends to $7\pi/4$. If $a^2 > 2$, then we have two more additional critical points
$$
\varphi_{max}^2 = \mathrm{arccos}(\lambda_2), \quad \varphi_{min}^2 = 2\pi - \mathrm{arccos}(\lambda_2).
$$
As $a^2$ tends to $2$, $\varphi_{min}^2$ tends to $\pi$ and $\varphi_{max}^2$ also tends to $\pi$. As $a^2$ tends to $\infty$, $\varphi_{min}^2$ tends to $5\pi/4$ and $\varphi_{max}^2 $ tends $ 3\pi/4$. 

\begin{proposition}
For any $2\pi$-periodic $F(t)$ such that $|F(t)| \leqslant 2/\pi$ and $-F(t) \cos \varphi^2_{max} < \Phi(\varphi^2_{max})$, $\Phi(\varphi^2_{min}) < -F(t) \cos \varphi^2_{min}$ for all $t$, there exists a $2\pi$-periodic non-falling asymptotically stable solution of \eqref{eq3}.
\end{proposition}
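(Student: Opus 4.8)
The plan is to collapse the averaged system \eqref{eq3} to a single damped Duffing equation and then invoke the theorem of Torres stated above. Writing $p=\dot\varphi$ and identifying the slow time $s$ with $t$, system \eqref{eq3} is equivalent to
\begin{equation*}
\ddot\varphi + \mu\dot\varphi + g(t,\varphi) = 0, \qquad g(t,\varphi) = \sin\varphi + \frac{a^2}{4}\sin 2\varphi - F(t)\cos\varphi = -\Phi(\varphi) - F(t)\cos\varphi ,
\end{equation*}
which is of the form \eqref{duff} with friction coefficient $c=\mu>0$ and with $g$ analytic and $2\pi$-periodic in $t$. Note that any $2\pi$-periodic solution whose values stay in $(\pi/2,3\pi/2)$ is automatically non-falling, since the pendulum then never crosses the horizontal.

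First I would take the two comparison values to be the critical points of $\Phi$ that bracket the upper equilibrium, namely $\varphi^2_{max}$ and $\varphi^2_{min}$; both lie in $(\pi/2,3\pi/2)$ and satisfy $\varphi^2_{max}<\pi<\varphi^2_{min}$ because $\lambda_2<0$. Since $\Phi(\varphi^2_{max})>0$ and $\Phi(\varphi^2_{min})<0$, the two hypotheses of the proposition translate immediately into
\begin{equation*}
g(t,\varphi^2_{max}) = -\Phi(\varphi^2_{max}) - F(t)\cos\varphi^2_{max} < 0, \qquad g(t,\varphi^2_{min}) = -\Phi(\varphi^2_{min}) - F(t)\cos\varphi^2_{min} > 0 ,
\end{equation*}
for all $t$. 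These are exactly the opposite-sign requirements of the theorem at its two comparison values, with $\varphi=\pi$ trapped strictly between them; so once the remaining hypothesis is verified the theorem returns an asymptotically stable $2\pi$-periodic solution $\varphi(t)\in(\varphi^2_{max},\varphi^2_{min})\subset(\pi/2,3\pi/2)$, which is the desired non-falling asymptotically stable solution.

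The remaining and principal step is to produce $f\in\Omega_{k,c}$ dominating $\partial g/\partial\varphi$ on the trapping interval. Here
\begin{equation*}
\frac{\partial g}{\partial\varphi}(t,\varphi) = \cos\varphi + \frac{a^2}{2}\cos 2\varphi + F(t)\sin\varphi ,
\end{equation*}
and I would work with $k=1$, for which $2k/(k-1)=+\infty$ and the threshold constant in the definition of $\Omega_{k,c}$ equals $K(+\infty)=2/\pi$ --- which is precisely why the hypothesis is phrased with $2/\pi$. I would then set $f(t)$ equal to a majorant of $\sup_{\varphi\in[\varphi^2_{max},\varphi^2_{min}]}\partial g/\partial\varphi(t,\varphi)$, use $|F(t)|\le 2/\pi$ to control the forcing term $F(t)\sin\varphi$, estimate the autonomous part $\cos\varphi+\frac{a^2}{2}\cos 2\varphi$ uniformly on the interval, and check both $f\succ 0$ and $\|f\|_1<(1+\mu^2/4)\,(2/\pi)$.

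The hard part is exactly this last inequality. The term $\frac{a^2}{2}\cos 2\varphi$ is positive and of size $a^2/2-1$ at $\varphi=\pi$ (where the forcing contribution vanishes), so $\sup_\varphi\partial g/\partial\varphi(t,\cdot)\ge a^2/2-1$ for every $t$, and keeping $\|f\|_1$ below the Lyapunov--Sobolev threshold $(1+\mu^2/4)(2/\pi)$ constrains $a^2$ to a definite range above the stabilization value $a^2=2$ (the friction $\mu$ relaxing this range through the factor $1+\mu^2/4$). Balancing $a$, $\mu$ and the bound $|F|\le 2/\pi$ so that this single norm inequality holds is where the real work lies; once it is settled, the theorem of Torres applies and, transporting the solution back via $p=\dot\varphi$, $s=t$, yields the asserted non-falling asymptotically stable $2\pi$-periodic solution of \eqref{eq3}.
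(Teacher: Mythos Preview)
Your overall strategy coincides with the paper's: rewrite \eqref{eq3} as a Duffing equation $\ddot\varphi+\mu\dot\varphi+g(t,\varphi)=0$, take the two critical points $\varphi^{2}_{\max}<\varphi^{2}_{\min}$ of $\Phi$ as the comparison values, read the sign conditions on $g$ at these points directly from the two hypotheses on $F$, and then invoke Torres' theorem. The divergence, and the place where your argument is left genuinely open, is the verification that the majorant $f$ belongs to $\Omega_{k,c}$.

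You work with $k=1$, which forces the bound $\|f\|_{1}<(1+\mu^{2}/4)\,K(+\infty)=(1+\mu^{2}/4)\cdot 2/\pi$. But, as you yourself note, the autonomous part of $\partial g/\partial\varphi$ at $\varphi=\pi$ equals $a^{2}/2-1>0$, so any admissible majorant satisfies $f(t)\ge a^{2}/2-1$ for every $t$, whence $\|f\|_{1}\ge 2\pi(a^{2}/2-1)$. This already exceeds $(1+\mu^{2}/4)\cdot 2/\pi$ unless $a^{2}$ is extremely close to $2$ or $\mu$ is very large, neither of which is assumed in the statement. So the $k=1$ route cannot be closed in the stated generality; your ``once it is settled'' hides an obstruction, not merely a computation.

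The paper takes the opposite extreme $k=+\infty$ and the \emph{constant} majorant $f\equiv 2/\pi$. Its key step is the claim that on $[\varphi^{2}_{\max},\varphi^{2}_{\min}]$ one has $\cos\varphi+\tfrac{a^{2}}{2}\cos 2\varphi\le 0$; granted this, $\partial g/\partial\varphi\le F(t)\sin\varphi\le |F(t)|\le 2/\pi$, so $f\equiv 2/\pi$ is a valid bound and the membership $f\in\Omega_{\infty,c}$ reduces to checking $\|f\|_{\infty}=2/\pi$. Thus the ``balancing'' you anticipate is avoided in the paper by discarding the autonomous contribution entirely rather than estimating it. (It is worth juxtaposing your own computation $a^{2}/2-1>0$ at $\varphi=\pi$ with the paper's claimed sign of that term on the interval.)
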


\begin{proof}
Let us put $\alpha = \varphi^2_{min}$ and $\beta = \varphi^2_{max}$. Then inequalities $g(t, \alpha) < 0$ and $g(t, \beta) > 0$ hold. We have
$$ 
g(t, \varphi) = \sin \varphi + \frac{a^2}{4} \sin 2\varphi - F(t) \cos\varphi,
$$
and
$$ 
\frac{\partial g}{\partial \varphi} = \cos \varphi + \frac{a^2}{2} \cos 2\varphi + F(t) \sin\varphi.
$$
We also have that
$$
\cos \varphi + \frac{a^2}{2} \cos 2\varphi \leqslant 0
$$
for all $\varphi \in [\varphi^2_{max}, \varphi^2_{min}]$. Finally, we can put $f(t)= 2/\pi$, $k = +\infty$ and apply Theorem 2.
\end{proof}

\subsection{Periodic solutions of the original system}

Let us now discuss the connection between the existence of periodic solutions of the averaged \eqref{eq3} and the original \eqref{eq2} systems. The basic result which establish this connection is a classical result by N.N. Bogolyubov on the existence of an integral manifold. A more detailed explanation of the results presented below can be found in \cite{bogolyubov1961asymptotic,mitropolsky1973integral}.

Given a system of ordinary differential equations in the standard form of the method of averaging 
\begin{equation}
    \label{eq33_1}
    \frac{d x}{d t}  = \varepsilon X(t, x),
\end{equation}
where $x \in \mathbb{R}^n$, $t \in \mathbb{R}$, $\varepsilon$ is a parameter, $X \colon \mathbb{R} \times \mathbb{R}^n \to \mathbb{R}^n$. Let function $X$ be a $2\pi$-periodic in $t$. We call system \eqref{eq33_1} the original system. Let us also consider the averaged system
\begin{equation}
    \label{eq33_2}
    \frac{d x}{d \tau} = X_0(x),
\end{equation}
where function $X_0$ is the average of function $X$ w.r.t. the periodic variable $t$ and $\tau = \varepsilon t$
$$
X_0(x) = \frac{1}{2\pi}\int\limits_0^{2\pi} X(t, x)\, dt.
$$

Let us assume that averaged system \eqref{eq33_2} has a periodic solution $\tilde x(\tau)$ of period $2\pi$ and the characteristic multipliers of this solution are strictly inside the unit circle. Then for sufficiently small non-negative $\varepsilon$ there exists an integral manifold close to solution $\tilde x$. Let us recall that we say that a manifold is integral if any solution starting at this manifold never leaves it. In coordinates, this integral manifold, which we denote by $S$, can be presented as follows
$$
x = f_\varepsilon(t,\theta),
$$
where $f_\varepsilon$ is $2\pi$-periodic in both variables. The period in $t$ is equal to the corresponding period of function $X(t,x)$, $\theta$ is an angular variable and we can assume without loss of the generality that function $f_\varepsilon(t,\theta)$ is $2\pi$-periodic in $\theta$.

In a sufficiently small neighborhood of the periodic solution our integral manifold is unique. Moreover, this integral manifold is asymptotically stable, i.e. any solution of the original system asymptotically tends to $S$, provided that the initial conditions for this solution are located sufficiently close to $S$. At the same time, in the general case, we cannot conclude that all solutions on $S$ are periodic for arbitrary $\varepsilon$.

However, for system \eqref{eq2} it can be proved that integral manifold $S$ is stratified into $2\pi$-periodic solutions, provided that $\varepsilon = 1/k$ and $k \in \mathbb{N}$ is a sufficiently large number. Note, that in our case angular variable $\theta$ corresponds to variable $s$, which defines the initial moment of time for our original system. Since the original and the averaged equations for $s$ coincides, then from the asymptotic stability of $S$ we obtain the asymptotic stability for every periodic solution on $S$ (provided that we do not change the initial condition for variable $s$).

In other words, if the periods of the vertical oscillation and the horizontal force are commensurable and the averaged system has an asymptotically stable $2\pi$-periodic solution, then the original system also has an asymptotically stable $2\pi$-periodic solution, provided that $k$ is large.

\section{Numerical results}

\subsection{Stability of a given periodic motion}

The above analytical results hold only in the case when the external horizontal force is weak or some additional assumptions hold. Below we present numerical results for the cases when neither $\mu$ nor $F(t)$ in system \eqref{eq3} is small.

By a direct calculation, we obtain the following result showing that any given periodic non-falling solution can be obtained as a solution of our system for some periodic horizontal time-dependent force.

\begin{lemma}
For any $2\pi$-periodic function $\varphi(t)$ such that $\varphi(t) \in (\frac{\pi}{2}, \frac{3\pi}{2})$ for all $t$, there exists a unique $2\pi$-periodic function $F(t)$ such that $\varphi(t)$ is a solution of \eqref{eq3}. Function $F(t)$ has the following form:
$$
F(t) = \frac{ \ddot \varphi(t) + \sin \varphi(t) + \mu \dot \varphi(t) + (a^2/4) \sin 2 \varphi(t) }{\cos \varphi(t)}.
$$
\end{lemma}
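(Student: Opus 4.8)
The plan is to treat the averaged system \eqref{eq3} as a single scalar second-order equation and solve for $F$ by direct algebraic inversion. Substituting $p = \dot\varphi$ from the first equation into the second (and identifying the slow-time coordinate $s$ with $t$ along the solution), I would rewrite \eqref{eq3} as
$$
\ddot\varphi + \mu\dot\varphi + \sin\varphi + \frac{a^2}{4}\sin 2\varphi = F(t)\cos\varphi.
$$
The key structural observation is that the forcing enters linearly, multiplied by $\cos\varphi$; thus, for a prescribed trajectory $\varphi(t)$, this equation is a pointwise linear algebraic constraint on the unknown $F(t)$ rather than a differential equation for it.

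Next I would solve this constraint at each instant. Because $\varphi(t)\in(\pi/2,3\pi/2)$ for all $t$, one has $\cos\varphi(t)<0$, and in particular $\cos\varphi(t)\neq 0$, so the division is legitimate and yields precisely the stated expression
$$
F(t) = \frac{\ddot\varphi(t) + \sin\varphi(t) + \mu\dot\varphi(t) + (a^2/4)\sin 2\varphi(t)}{\cos\varphi(t)}.
$$
Uniqueness is then immediate: any admissible $F$ must satisfy the same pointwise identity, so it is completely determined by $\varphi$ with no remaining freedom.

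For periodicity and regularity I would argue as follows. Since $\varphi$ is $2\pi$-periodic (and real analytic, by the standing assumption), so are its derivatives $\dot\varphi,\ddot\varphi$ and the compositions $\sin\varphi,\sin 2\varphi,\cos\varphi$. The image $\varphi([0,2\pi])$ is a compact subset of the open interval $(\pi/2,3\pi/2)$, on which $\cos$ is continuous and nonvanishing; hence the denominator $\cos\varphi(t)$ stays bounded away from $0$. Consequently the quotient defining $F$ is a genuine $2\pi$-periodic function of the same regularity class as $\varphi$.

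There is no real obstacle here — the proof is a direct verification — so the only point deserving attention is the non-vanishing of the denominator, which is exactly what the non-falling hypothesis $\varphi\in(\pi/2,3\pi/2)$ supplies. This is also where the assumption is genuinely needed: at a horizontal position the factor $\cos\varphi$ vanishes, the horizontal force loses its torque leverage, and no choice of $F$ can realize a prescribed angular acceleration, so the construction would break down precisely where the pendulum passes through the horizontal.
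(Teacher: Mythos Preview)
Your proof is correct and matches the paper's approach: the paper simply says the lemma follows ``by a direct calculation'' and gives no further argument, so your explicit algebraic inversion together with the observation that $\cos\varphi(t)\neq 0$ on $(\pi/2,3\pi/2)$ is exactly the content the paper leaves implicit.
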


\begin{figure}[h!]
\begin{subfigure}{.5\textwidth}
  \centering
  \includegraphics[width=1.0\linewidth]{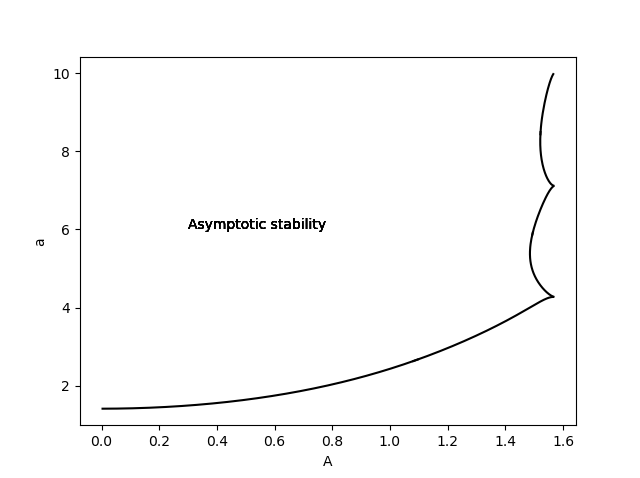}
  \caption{$\mu = 5$}
  \label{fig:sfig2}
\end{subfigure}%
\begin{subfigure}{.5\textwidth}
  \centering
  \includegraphics[width=1.0\linewidth]{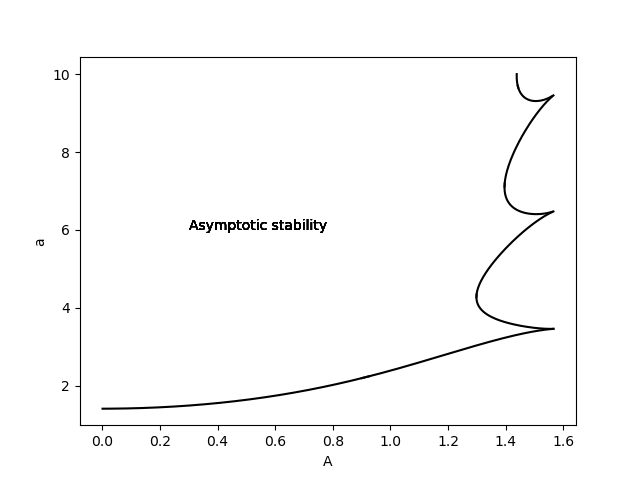}
  \caption{$\mu = 3$}
  \label{fig:sfig1}
\end{subfigure}\\
\begin{subfigure}{.5\textwidth}
  \centering
  \includegraphics[width=1.0\linewidth]{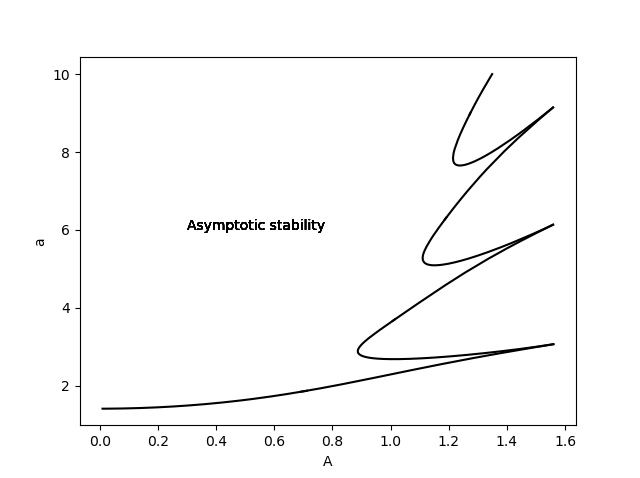}
  \caption{$\mu = 1$}
  \label{fig:sfig2}
\end{subfigure}%
\begin{subfigure}{.5\textwidth}
  \centering
  \includegraphics[width=1.0\linewidth]{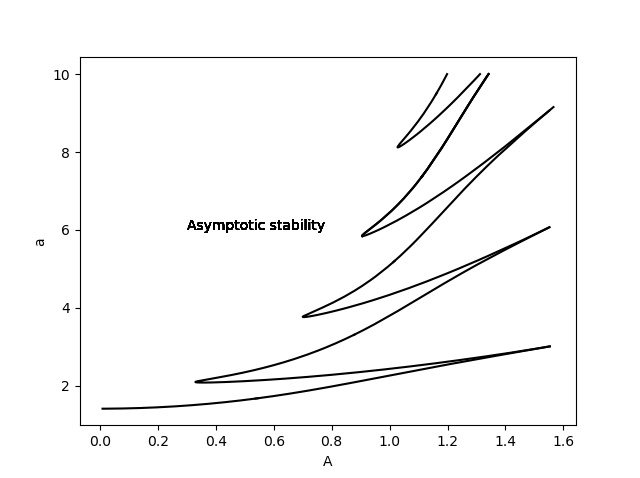}
  \caption{$\mu = 0.1$}
  \label{fig:sfig1}
\end{subfigure}
\caption{Regions of asymptotic stability for various $\mu$.}
\end{figure}

Let us assume that the values of parameters $\mu$ and $a$ in system \eqref{eq3} are given. Given any non-falling $2\pi$-periodic trajectory of motion $\varphi(t)$ ($\varphi(t) \in (\pi/2,3\pi/2)$ for all $t$), we find $F(t)$ such that $\varphi(t)$ is a solution of \eqref{eq3}. Here we put $p(t) = \dot \varphi(t)$. For instance, if we put $\varphi(t) = \pi - \pi/4 \cdot \cos t$, then we consider a periodic solution of amplitude $\pi/4$. For some values of $a$ our solution may become asymptotically stable. In this case, in the original system we also have a non-falling $2\pi$-periodic solution and these two solutions of systems \eqref{eq2} and \eqref{eq3} are close provided that $\varepsilon = 1/k$ is small. 

We will consider the following one-parameter family of solutions:
$$
\varphi_1(t) = \pi - A \cos t, \quad A \in [0, \pi/2).
$$
At the same time, we will consider various values of parameter $\mu > 0$. In Figure 2 one can find the regions of asymptotic stability for this problem.

Note that some points of the region of asymptotic stability are arbitrarily close to vertical line $A = \pi/2$. It means that for any amplitude $A < \pi/2$ we can stabilize the corresponding solution by choosing some $a$ from a bounded interval.

Moreover, the distance between the consecutive points in which the region of asymptotic stability is arbitrarily close to line $A = \pi/2$ equals $2\pi$. Based on the numerical results, it is also possible to conclude that for the first point in this sequence we have $a \to \pi$ as $\mu \to 0$.  

The most interesting effect here is that, for a given value of $A$, i.e. for a given horizontal force, the corresponding unique periodic solution can be stable for some values of $a$ and can be unstable for larger $a$. To be more precise, for $A$ close to $\pi/2$, we have multiple switchings between stable and unstable regimes as $a$ grows to infinity.

\subsection{Stability of a periodic non-falling solution for a given horizontal force}

The second problem that we study numerically can be formulated as follows.  For any $\mu > 0$ и $F \equiv 0$ there exists the vertical equilibrium of system \eqref{eq3} for which $\varphi = \pi$. This equilibrium is asymptotically stable when $a > \sqrt{2}$. It can be shown that for any $T$-periodic horizontal force $F$ there exists at least one non-falling $T$-periodic solution. If $F \equiv 0$ then the vertical equilibrium can be considered as the required periodic non-falling solution solution. We will numerically find the values of $a$ such that the corresponding periodic solution is asymptotically stable: we will start from know condition $a > \sqrt{2}$ (for $A = 0$) and will numerically continue this condition for larger $A$.

To be more precise, if our force $F$ depends on some parameter (denoted by $A$), then we will obtain a plot of $a$ as a function of $A$. For any given $A$, the corresponding value of $a$ can be described as follows: if the amplitude of the vertical oscillations is slightly less than $a$, then the corresponding periodic solution is non-stable, if the amplitude of the vertical oscillations is slightly greater than $a$, then the corresponding periodic solution is asymptotically stable. Everywhere below we assume that $T = 2\pi$.

Let us present a more detailed explanation of the algorithm. For any given force $F$ we have to find a periodic non-falling solution of system \eqref{eq3}. Let us consider the following function $\Phi \colon \mathbb{R} \times \mathbb{R} \to \mathbb{R}$
$$
\Phi(\varphi_0, p_0) = \left( (\varphi(2\pi; \varphi_0, p_0) - \varphi_0)^2 + (p(2\pi; \varphi_0, p_0) - p_0)^2 \right)^{1/2}.
$$
Here $\varphi(t; \varphi_0, p_0)$ are $p(t; \varphi_0, p_0)$ the components of the solution of system \eqref{eq3} when $\varphi(0) = \varphi_0$ и $p(0) = p_0$. The periodic solutions correspond to the zeros of $\Phi$. We will use the method of numerical continuation and will try to find zeros close to the zeros obtained for close values of $A$. In other words, we restrict the search for periodic solutions to a neighborhood of the periodic solution obtained on the previous step (for the previous value of $A$). 

When the zeros are found, we obtain numerically the monodromy matrices of the corresponding periodic solutions and check whether or not all characteristic multipliers belong to the unit circle. The required values of $a$ is determined by means of the bisection method.

Two level set plots of two different functions $\Phi$ and their zeros are shown in Figure 3.

\begin{figure}[h!]
\begin{subfigure}{.5\textwidth}
  \centering
  \includegraphics[width=1.0\linewidth]{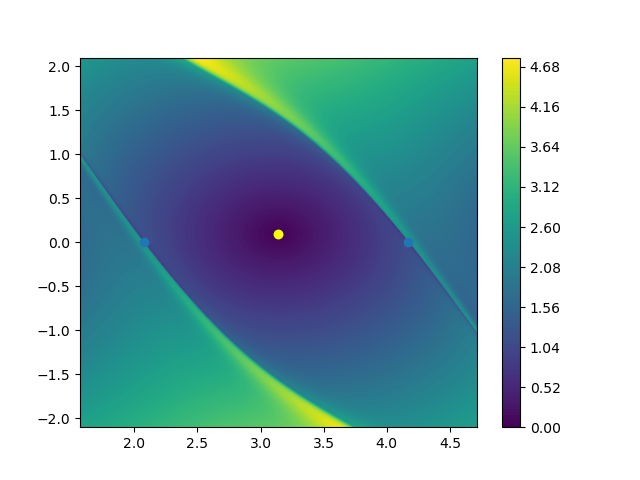}
  \caption{$A$ is small, $a$ is large ($\mu=1$, $A=0.1$, $a=2$).}
  \label{fig:sfig2}
\end{subfigure}%
\begin{subfigure}{.5\textwidth}
  \centering
  \includegraphics[width=1.0\linewidth]{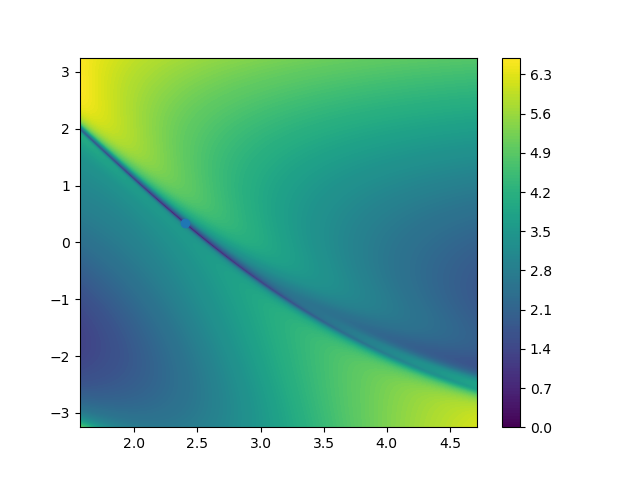}
  \caption{$a$ is small, $A$ is large ($\mu=1$, $A=2$, $a=1$).}
  \label{fig:sfig1}
\end{subfigure}
\caption{Asymptotically stable (yellow) and unstable (blue) $2\pi$-periodic solutions and the corresponding zeros of function $\Phi$.}
\end{figure}

The resulting plots are shown in Figure 4. In Figure 5 one can find how an unstable periodic solution bifurcates and two unstable and one asymptotically stable periodic solutions appear as we cross the curve presented in Figure 4 for $\mu=0.1$.

The problem of finding of all periodic non-falling solutions for a given $A$ is more difficult from the computational point of view. Therefore, we mainly focus on the numerical continuation of a known solution. The following lemma allows one to simplify the search of a periodic solution for system $\eqref{eq3}$. This Lemma has been used to obtain Figure 5.

\begin{figure}[h!]
\centering
\includegraphics[width=0.7\linewidth]{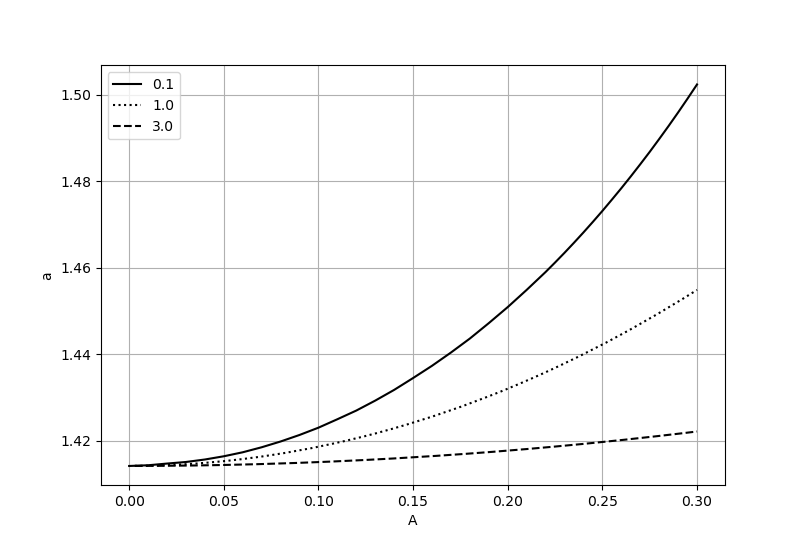}
\label{fig:sfig2}
\caption{$a$ vs. $A$ plots for $\mu$ equals $0.1$, $1$, and $3$. The regions above the plots corresponds to the values of $a$ and $A$ such that there exists at least one $2\pi$-periodic asymptotically stable non-falling solution.}
\end{figure}

\begin{lemma}
A periodic non-falling solution of system \eqref{eq3} can exist only inside the rectangle $\varphi \in [\pi/2, 3\pi/2]$, $p \in [-P, P]$, where
$$
P = \frac{1+ a^2/4 + \max|F|}{\mu}.
$$
\end{lemma}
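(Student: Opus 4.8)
The plan is to separate the two coordinate bounds, since they come from entirely different considerations. The inclusion $\varphi(t)\in[\pi/2,3\pi/2]$ is nothing more than the definition of a non-falling solution: by assumption such a trajectory satisfies $\varphi(t)\in(\pi/2,3\pi/2)$ for all $t$, and this open interval is contained in the closed one. Hence the only substantive claim is the a priori bound $|p(t)|\le P$ on the angular velocity, and that is where the work lies.

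For the velocity bound I would treat the second equation of \eqref{eq3} as a scalar linear equation for $p$ driven by a bounded right-hand side. Writing $\dot p + \mu p = R(t)$ with $R(t) = -\sin\varphi(t) - (a^2/4)\sin 2\varphi(t) + F(t)\cos\varphi(t)$, and setting $M = 1 + a^2/4 + \max|F|$, the triangle inequality together with $|\sin\varphi|\le 1$, $|\sin 2\varphi|\le 1$, $|\cos\varphi|\le 1$ gives $|R(t)|\le M$ uniformly in $t$, so that indeed $P = M/\mu$.

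The key step is then to exploit periodicity through the extrema of $p$. Since the solution is periodic and $p=\dot\varphi$ is $C^1$, the function $p(t)$ attains its maximum over a period at some $t_{+}$ and its minimum at some $t_{-}$, and at both of these points $\dot p=0$. Evaluating the rewritten equation at $t_{+}$ yields $\mu\,p(t_{+}) = R(t_{+}) \le M$, hence $p(t)\le p(t_{+})\le M/\mu = P$; evaluating at $t_{-}$ gives $\mu\,p(t_{-}) = R(t_{-}) \ge -M$, hence $p(t)\ge -P$. This establishes $|p(t)|\le P$ for all $t$ and completes the confinement of the trajectory to the stated rectangle.

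I do not expect a genuine obstacle: the argument is a direct computation once one notices that at the extrema of a periodic $p$ the derivative vanishes, which converts the differential equation into the algebraic bound $\mu|p|\le|R|\le M$. If one prefers to avoid the extremum argument, the identical conclusion follows from the variation-of-constants representation of the unique periodic solution of $\dot p + \mu p = R$, namely $p(t) = \int_{-\infty}^{t} e^{-\mu(t-\sigma)}R(\sigma)\,d\sigma$, whose absolute value is bounded by $M\int_{-\infty}^{t} e^{-\mu(t-\sigma)}\,d\sigma = M/\mu$. Either route is routine; the only point requiring mild care is that $\mu>0$ is used essentially, since it is precisely positivity of the friction that makes $P$ finite (and, in the second route, makes the integral converge).
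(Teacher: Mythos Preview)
Your proof is correct and matches the paper's argument in substance: the paper likewise dismisses the $\varphi$-bound as the definition of non-falling, and for the $p$-bound observes that $\dot p<0$ whenever $p>P$ (respectively $\dot p>0$ when $p<-P$), which is precisely the contrapositive of your extremum argument that $\dot p=0$ forces $|p|\le P$. The variation-of-constants alternative you sketch is a slight variant not in the paper, but the primary argument is essentially the same.
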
 
\begin{proof}

The first condition is the definition of a non-falling solution. Let us show that a periodic solution cannot exist in the region where $p > P$. Indeed, in all points of this region we have $\dot p < 0$ (it follows from system \eqref{eq3}). Therefore, we obtain that the solution cannot return to its initial condition. Similarly one can consider the case when $p < -P$.
\end{proof}

\begin{figure}[h!]
\begin{subfigure}{.5\textwidth}
  \centering
  \includegraphics[width=1.0\linewidth]{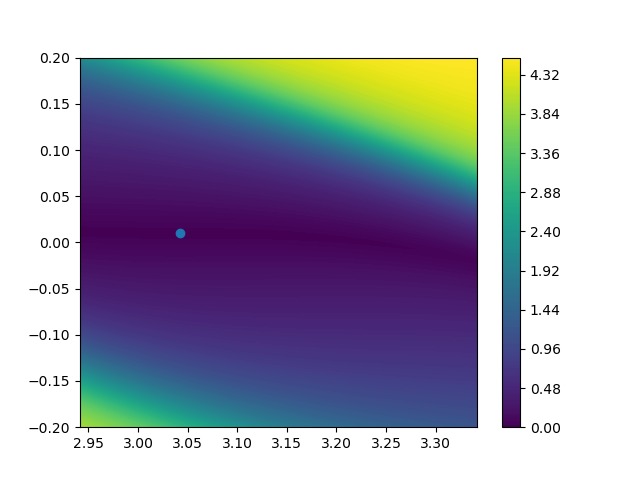}
  \caption{$a = 1.4220$}
  \label{fig:sfig2}
\end{subfigure}%
\begin{subfigure}{.5\textwidth}
  \centering
  \includegraphics[width=1.0\linewidth]{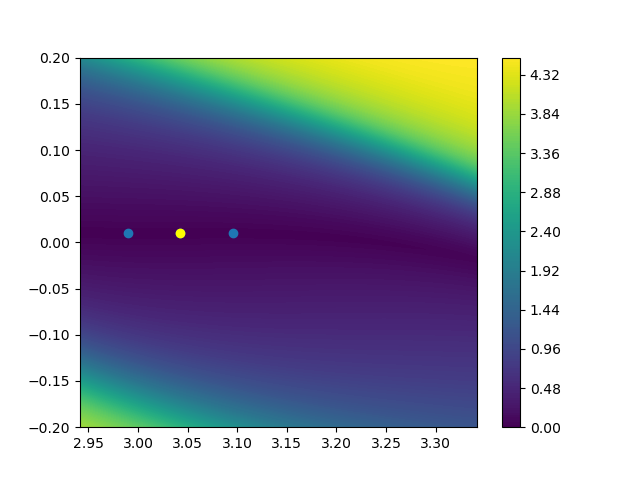}
  \caption{$a = 1.4240$}
  \label{fig:sfig1}
\end{subfigure}
\caption{Bifurcation of periodic solutions for $A = 0.1$ and $\mu = 0.1$.}
\end{figure}

\section{Conclusion}

A generalization of the well known Kapitza pendulum has been considered. In the case when there is no additional horizontal force the considered and system is obviously equivalent to the Kapitza pendulum. Some results on the existence of asymptotically stable periodic non-falling solutions are presented. In particular, the obtained numerical results (Figure 4) generalize classical conditions for the asymptotical stability of the upward vertical equilibrium of the Kapitza pendulum. 

However, the considered system is far from being completely studied. In the conclusion section we would like to outline some open problem, which will be studied elsewhere.

First, it would be interesting to prove that for any periodic (and, therefore, bounded) force $F(t)$ there exists a sufficiently large $a$ such that system \eqref{eq3} has a periodic non-falling solution provided that $\varepsilon = 1/k$ is small. It is interesting to find the correspondence between the norm of $F(t)$ and the minimal possible value of $a$ such that this solution exists.

Second, it may be of some practical interest to prove results concerning the qualitative view of plots shown in Figure 4. Is it true that these plots are always monotonously increasing? Is it possible to find several separated regions of stability for a given $A$ (similarly to the cases shown in Figure 2)? It will be interesting to compare Figure 2 with the well known Ince-Strutt diagram for the Mathieu equation (see, for instance, \cite{butikov2018analytical}). 

Third, it is known that under some additional assumptions, there exist at least two periodic solutions of the Kapitza-Whitney pendulum \cite{polekhin2020method}. Is it true that the existence of two unstable periodic non-falling solutions guarantees the existence of at least one stable periodic non-falling solution? Note that the mentioned two solutions are unstable if there is no friction in the system \cite{bolotin2015calculus}. From the above results it follows that this statement holds if $F(t)$ is small.

As a last open problem in our short list we would like to mention the problem of existence of so-called subharmonic solutions (the solution is called subharmonic if this solution is $nT$-periodic and not $kT$-periodic for all $k < n$, $k, n \in \mathbb{N}$). It would be interesting to compare the minimal values of $a$ which stabilize the solution for $T$-periodic and $nT$-periodic solutions. 

Finally, we can add that many questions that can be asked about the Kapitza pendulum can be easily reformulated for the Kapitza-Whitney pendulum.

\section*{Acknowledgements}

This work was supported by the Russian Science Foundation (Project no. 19-71-30012).

\printbibliography

@book{mitropolsky1973integral,
      title     = {Integral Manifolds in Nonlinear Mechanics (in Russian)},
      author    = {Mitropolsky, Y A and Lykova, O V},
      year      = {1973},
      publisher = {Nauka},
      address   = {Moscow}
    }

@book{malkin1949methods,
      title     = {Methods of Lyapunov and Poincar{\'e} in the Theory of Nonlinear Vibrations (in Russian)},
      author    = {Malkin, I G},
      year      = {1949},
      publisher = {Gostekhizdat},
      address   = {Moscow--Leningrad}
    }

@article{njoku2003stability,
  title={Stability properties of periodic solutions of a Duffing equation in the presence of lower and upper solutions},
  author={Njoku, Franic Ikechukwu and Omari, Pierpaolo},
  journal={Applied mathematics and computation},
  volume={135},
  number={2-3},
  pages={471--490},
  year={2003},
  publisher={Elsevier}
}

@article{torres2004existence,
  title={Existence and stability of periodic solutions of a Duffing equation by using a new maximum principle},
  author={Torres, Pedro J},
  journal={Mediterranean Journal of Mathematics},
  volume={1},
  number={4},
  pages={479--486},
  year={2004},
  publisher={Springer}
}

@article{bolotin2015calculus,
  title={Calculus of variations in the large, existence of trajectories in a domain with boundary, and {W}hitney's inverted pendulum problem},
  author={Bolotin, Sergey Vladimirovich and Kozlov, Valery Vasil'evich},
  journal={Izvestiya: Mathematics},
  volume={79},
  number={5},
  pages={894},
  year={2015},
  publisher={IOP Publishing}
}

@article{butikov2018analytical,
  title={Analytical expressions for stability regions in the {I}nce--{S}trutt diagram of Mathieu equation},
  author={Butikov, Eugene I},
  journal={American Journal of Physics},
  volume={86},
  number={4},
  pages={257--267},
  year={2018},
  publisher={American Association of Physics Teachers}
}

@book{bogolyubov1961asymptotic,
  title={Asymptotic Methods in the Theory of Non-linear Oscillations},
  author={Bogolyubov, Nikolai N and Mitropolskij, Yurij A},
  year={1963},
  publisher={Nauka},
  address={Moscow}
}

@article{polekhin2020method,
  title={The Method of Averaging for the {K}apitza--{W}hitney Pendulum},
  author={Polekhin, Ivan Yu},
  journal={Regular and Chaotic Dynamics},
  volume={25},
  number={4},
  pages={401--410},
  year={2020},
  publisher={Springer}
}

@article{stephenson1908xx,
  title={On induced stability},
  author={Stephenson, Andrew},
  journal={The London, Edinburgh, and Dublin Philosophical Magazine and Journal of Science},
  volume={15},
  number={86},
  pages={233--236},
  year={1908},
  publisher={Taylor \& Francis}
}

@article{kapitsa1951pendulum,
  title={The pendulum with an oscillating pivot point},
  author={Kapitsa, P. L.},
  journal={Uspekhi fizicheskikh nauk},
  volume={44},
  number={7},
  pages={7--20},
  year={1951}
}

@article{bogolyubov1950perturbation,
  title={Perturbation theory in nonlinear mechanics},
  author={Bogolyubov, N. N.},
  journal={Collection of Papers of Inst. Construction Mekhaniki Akad. Nauk UkrSSR},
  volume={14},
  pages={9--34},
  year={1950}
}

@article{kapitsa1951dynamic,
  title={Dynamic stability of the pendulum when the point of suspension is oscillating},
  author={Kapitsa, P. L.},
  journal={Sov. Phys. JETP},
  volume={21},
  pages={588},
  year={1951}
}

@article{artstein2021pendulum,
  title={The pendulum under vibrations revisited},
  author={Artstein, Zvi},
  journal={Nonlinearity},
  volume={34},
  number={1},
  pages={394},
  year={2021},
  publisher={IOP Publishing}
}

@article{araujo2021parametric,
  title={Parametric Stability of a Charged Pendulum with an Oscillating Suspension Point},
  author={Araujo, Gerson Cruz and Cabral, Hildeberto E},
  journal={Regular and Chaotic Dynamics},
  volume={26},
  number={1},
  pages={39--60},
  year={2021},
  publisher={Springer}
}

@article{cabral2021parametric,
  title={Parametric stability of a charged pendulum with oscillating suspension point},
  author={Cabral, Hildeberto E and Carvalho, Adecarlos C},
  journal={Journal of Differential Equations},
  volume={284},
  pages={23--38},
  year={2021},
  publisher={Elsevier}
}

@article{belyaev2021classical,
  title={Classical {K}apitsa’s problem of stability of an inverted pendulum and some generalizations},
  author={Belyaev, A. K. and Morozov, N. F. and Tovstik, P. E. and Tovstik, T. M. and Tovstik, T. P.},
  journal={Acta Mechanica},
  pages={1--17},
  year={2021},
  publisher={Springer}
}

@article{butikov2001dynamic,
  title={On the dynamic stabilization of an inverted pendulum},
  author={Butikov, Eugene I},
  journal={American Journal of Physics},
  volume={69},
  number={7},
  pages={755--768},
  year={2001},
  publisher={American Association of Physics Teachers}
}

@article{samoilenko1994nn,
  title={{N}.{N}. {B}ogolyubov and non-linear mechanics},
  author={Samoilenko, Anatoly Mykhailovych},
  journal={Russian Mathematical Surveys},
  volume={49},
  number={5},
  pages={109},
  year={1994},
  publisher={IOP Publishing}
}

@article{bardin1995stability,
  title={The stability of the equilibrium of a pendulum for vertical oscillations of the point of suspension},
  author={Bardin, BS and Markeyev, AP},
  journal={Journal of Applied Mathematics and Mechanics},
  volume={59},
  number={6},
  pages={879--886},
  year={1995},
  publisher={Elsevier}
}

@book{burd2007method,
  title={Method of averaging for differential equations on an infinite interval: theory and applications},
  author={Burd, Vladimir},
  year={2007},
  publisher={Chapman and Hall/CRC}
}
\end{document}